\newtheorem{theorem}{Theorem}[section]
\newtheorem{lemma}[theorem]{Lemma}
\numberwithin{equation}{section}
\newcommand{\f}{\frac}
\renewcommand{\l}{\lambda}
\newcommand{\be}{\begin{equation}}
\newcommand{\ee}{\end{equation}}
\newcommand{\bea}{\begin{eqnarray}}
\newcommand{\eea}{\end{eqnarray}}
\newcommand{\bna}{\begin{eqnarray*}}
\newcommand{\ena}{\end{eqnarray*}}
\journal{***}
\begin{document}
   
\begin{frontmatter}

\title{Solutions to  a generalized Chern-Simons Higgs model   on finite graphs by topological degree}

\author{Songbo Hou}
\ead{housb@cau.edu.cn}
\address{Department of Applied Mathematics, College of Science, China Agricultural University,  Beijing, 100083, P.R. China}
\author{Wenjie Qiao }
\ead{2228105725@qq.com}
\address{Department of Applied Mathematics, College of Science, China Agricultural University,  Beijing, 100083, P.R. China}
\cortext[cor1]{Corresponding author: Songbo Hou}

\begin{abstract}
Consider a finite connected graph denoted as $G=(V, E)$. This study explores a generalized Chern-Simons Higgs model, characterized by the equation:
$$
\Delta u = \lambda e^u (e^u - 1)^{2p+1} + f,$$
where $\Delta$ denotes the graph Laplacian, $\lambda$ is a real number, \(p\) is a non-negative integer, and \(f\) is a function on \(V\).
Through the computation of the topological degree, this paper demonstrates the existence of a single solution for the model. Further analysis of the interplay between the topological degree and the critical group of an associated functional reveals the presence of multiple solutions. These findings  extend the work of Li, Sun, Yang (arXiv:2309.12024) and Chao, Hou (J. Math. Anal. Appl. (2023) 126787).

\end{abstract}
\begin{keyword} finite graph\sep Chern-Simons model\sep topological degree\sep critical group 
\MSC [2020] 39A12, 46E39
\end{keyword}
\end{frontmatter}
\section{Introduction}

The Chern-Simons Higgs model, a significant area of research in theoretical physics, involves the integration of the Chern-Simons term within the framework of gauge theory.  Since the identification of the self-dual structure within the Abelian Chern-Simons model \cite{MR1050529,MR1050530} in 1990, there has been a surge of prolific research on self-dual Chern-Simons equations. Specifically, both topological and non-topological solutions have garnered significant attention  \cite{MR1429096,MR1690951,MR1946331,MR2168004,huang2014uniqueness,MR3390935,MR3412153}.

In \cite{MR1324400}, Caffarelli and Yang investigated the equation 
\begin{equation}\label{cye}
\Delta u=\lambda e^u\left(e^u-1\right)+4 \pi \sum_{j=1}^N \delta_{p_j} \quad \text { in } \quad \Omega,
\end{equation}
where $\Omega$ denotes the  doubly
periodic region of $
\mathbb{R}^2
$  or the  $
\text { the 2-torus } \Omega=\mathbb{R}^2 / \Omega 
$, $\lambda$ denotes a positive constant and $\delta_p$ denotes the Dirac distribution concentrated at $p \in \Omega$.  They proved that there exists a critical value of \( \lambda \), denoted as \( \lambda_c \), 
such that equation (\ref{cye}) admits a solution for \( \lambda > \lambda_c \). Conversely, for \( \lambda < \lambda_c \), the equation does not possess any solution.  Tarantello \cite{MR1400816} established the existence of solutions  for equation (\ref{cye}) if $\l=\l_c$ and obtained multiple condensate solutions if $\l>\l_c$.

Han \cite{MR3033571} established  the existence of multi-vortices for a generalized self-dual
Chern–Simons model over a   doubly
periodic region of $
\Omega\subset\mathbb{R}^2$:
\begin{equation}\label{mef}
\Delta u=\lambda e^u\left(e^u-1\right)^5+4 \pi \sum_{j=1}^N \delta_{p_j} \quad \text { in } \quad \Omega,
\end{equation}
where $\lambda$ is  a positive constant and $\delta_p$ is the Dirac distribution centred at $p \in \Omega$.

Recent investigations have expanded the exploration of the Chern-Simons model to encompass finite graphs. Notably, Huang, Lin, and Yau \cite{huang2020existence} conducted an in-depth study of equation (\ref{cye}), successfully establishing the existence of solutions for all but the critical case. Later,  Hou and Sun \cite{hou2022existence} addressed the critical case, resolving the existence of solutions therein. Furthermore, they extended their research to encompass a generalized form of the Chern-Simons equation. More studies on  the Chern-Simons model on graphs include \cite{huang2021mean,chao2022existence,CHAO2023126787,gao2022existence,hua2023existence} and so on. 

Variational methods have established themselves as an invaluable and robust tool for tackling partial differential equations on graphs. This methodology has been particularly effective in the study of complex equations like the Kazdan-Warner equations, as demonstrated in the works of Grigor et al. \cite{grigor2016kazdan} and others \cite{grigor2016kazdan,MR3648273,MR3776360}. The utility of variational methods extends to the domain of Schrödinger-type equations, which have been explored in  studies \cite{MR3833747,MR4092834}. Additionally, these methods have provided substantial insights into Yamabe-type equations, contributing  to our understanding of these complex problems \cite{MR3542963,MR3759076}. Equally noteworthy is their application in the realm of $p$-Laplacian equations, where they have facilitated advancements in both theory and application, as evidenced by recent research \cite{MR4344559,huaxu2023existence}.

Furthermore, the role of topological degree in the examination of nonlinear equations on graphs has been increasingly recognized as crucial. This concept has proven to be a key factor in unraveling the complexities of such equations, offering new perspectives and methodologies for their study, as highlighted in the latest literature \cite{MR4416135,MR4490503,li2023topological}. The integration of variational methods with topological degree concepts continues to be a dynamic and influential approach in the ongoing exploration and resolution of intricate equations on graph structures.

Let us denote \( V \) as the vertex set and \( E \) as the edge set. A finite graph can thus be represented as \( G = (V, E) \). We assume that \( G \) is connected, indicating that any pair of vertices can be connected via a finite sequence of edges. The weight on an edge \( xy \in E \) is defined as \( \omega_{xy} \), and is presumed to be symmetric, i.e., \( \omega_{xy} = \omega_{yx} \).

Consider \( \mu: V \rightarrow \mathbb{R}^{+} \) as a finite measure. The \( \mu \)-Laplacian operator for any function \( u: V \rightarrow \mathbb{R} \) is defined by
$$
\Delta u(x) = \frac{1}{\mu(x)} \sum_{y \sim x} \omega_{xy}(u(y) - u(x)),
$$
where \( y \sim x \) implies that \( xy \in E \). For any two functions \( u \) and \( \upsilon \), the gradient form is defined by
$$
\Gamma(u, \upsilon) = \frac{1}{2\mu(x)} \sum_{y \sim x} \omega_{xy}(u(y) - u(x))(\upsilon(y) - \upsilon(x)).
$$
When \( u = \upsilon \), we simply denote this as \( \Gamma(u) = \Gamma(u, u) \). Furthermore, define
$$
\| \nabla u \| (x) = \sqrt{\Gamma(u
	(x))} = \left( \frac{1}{2\mu(x)} \sum_{y \sim x} \omega_{xy}(u(y) - u(x))^2 \right)^{\frac{1}{2}}.$$
The integral over \( V \) is defined as
$$
\int_V u d\mu = \sum_{x \in V} \mu(x) u(x).
$$

In this paper, we consider   the following  Chern-Simons model on a graph \( G = (V, E) \): 
\begin{equation}\label{meq}
\Delta u=\lambda e^u\left(e^u-1\right)^{2p+1}+f,
\end{equation}
where $\Delta$ is the graph Laplacian, $\lambda$ is a real number, $p$ is a non-negative integer  and $f$ is a function on $V$.  If $p=0$ and $f = 4\pi \sum_{j=1}^{N} \delta_{p_j}$, where $p_1, \ldots, p_N \in V$, then equation (\ref{meq}) simplifies to equation (\ref{cye}). Conversely, if $p=2$ and $f = 4\pi \sum_{j=1}^{N} \delta_{p_j}$, equation (\ref{meq}) is reduced to equation (\ref{mef}).

Our objective is to use  the topological degree as demonstrated in \cite{MR4416135,li2023topological} for an in-depth analysis of the Chern-Simons Higgs model. The initial and foremost task in this endeavor is to acquire  a priori estimate of the solutions.

\begin{theorem}\label{th1} Let $ (V, E) $ represent a connected finite graph with symmetric weights, i.e., $ w_{xy}=w_{yx} $ for all $ xy \in E $. Suppose $ \sigma \in [0,1] $, $ \lambda $,  and $ f $ satisfy
\begin{equation}\label{thb}
\Lambda^{-1} \leq|\lambda| \leq \Lambda, \quad \Lambda^{-1} \leq\left|\int_V f d \mu\right| \leq \Lambda, \quad\|f\|_{L^{\infty}(V)} \leq \Lambda
\end{equation}
for some real number $ \Lambda>0 $. If $ u $ is a solution of

\begin{equation}\label{cs2}
\Delta u=\lambda e^u\left(e^u-\sigma\right)^{2p+1}+f \quad \text { in } \quad V,
\end{equation}
then there exists a constant $ C $, depending only on $ \Lambda $ and the graph $ V $, such that $ |u(x)| \leq C $ for all $ x \in V $.

\end{theorem}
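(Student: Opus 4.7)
The plan is to combine the discrete maximum principle, an integral identity obtained from the equation, and a test-function argument backed by the discrete Poincar\'e inequality. Throughout I write $g(t):=e^{t}(e^{t}-\sigma)^{2p+1}$; since $\sigma\in[0,1]$, elementary calculus shows $g$ is strictly increasing on $\mathbb{R}$ with $g(-\infty)=0$, $g(+\infty)=+\infty$, and $g(t)\ge -c_{p}$ for some constant $c_{p}$ depending only on $p$. I will derive separately (i) an upper bound $\max_{V}u\le M_{1}$, (ii) a bound on the oscillation $\max_{V}u-\min_{V}u\le M_{3}$, and (iii) a lower bound $\max_{V}u\ge -K_{0}$, and then combine these to get the required two-sided bound on $|u|$.

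For (i), integrate \eqref{cs2} over $V$ and use $\int_{V}\Delta u\,d\mu=0$ to obtain $\lambda\int_{V}g(u)\,d\mu=-\int_{V}f\,d\mu$, so that \eqref{thb} gives $\Lambda^{-2}\le|\int_{V}g(u)\,d\mu|\le\Lambda^{2}$. Combined with $g(u)\ge -c_{p}$, this yields $\int_{V}|g(u)|\,d\mu\le 2\Lambda^{2}+2c_{p}\mu(V)$; since $V$ is finite and $\mu>0$, the pointwise bound $|g(u(x))|\le C_{1}$ follows, and the monotonicity of $g$ then gives $u(x)\le M_{1}$ for all $x\in V$. The equation immediately implies $|\Delta u|\le M_{2}$ pointwise.

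For (ii), set $v:=u-\min_{V}u\ge 0$, so that $\min_{V}v=0$ and $\Delta v=\Delta u$. Testing \eqref{cs2} against $v$ and summing by parts (using $\Gamma(u,v)=\Gamma(u)$ because $u-v$ is a constant) gives
$$
\int_{V}\|\nabla v\|^{2}\,d\mu \;=\; -\lambda\int_{V}v\,g(u)\,d\mu-\int_{V}v f\,d\mu \;\le\; C_{3}\max_{V}v,
$$
where the right-hand side uses $|g(u)|\le C_{1}$, $|f|\le\Lambda$, and $v\le\max_{V}v$. The discrete Poincar\'e inequality on the finite connected graph $G$, applied to $v$ with $\min_{V}v=0$, reads $\max_{V}v\le C_{4}\|\nabla v\|_{L^{2}(V)}$; squaring and combining the two estimates yields $\max_{V}v\le M_{3}$.

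For (iii), observe that if $\max_{V}u\le -K$ with $K\ge 0$, then $u\le 0$ on $V$ and $|e^{u}-\sigma|\le 1$, so $|g(u)|\le e^{u}\le e^{-K}$ pointwise; hence $|\int_{V}g(u)\,d\mu|\le\mu(V)e^{-K}$, which contradicts $|\int_{V}g(u)\,d\mu|\ge\Lambda^{-2}$ once $K>\ln(\mu(V)\Lambda^{2})$. Thus $\max_{V}u\ge -K_{0}$, and $\min_{V}u\ge\max_{V}u-M_{3}\ge -K_{0}-M_{3}$; together with $\max_{V}u\le M_{1}$ this gives $|u(x)|\le C(\Lambda,V)$. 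I expect the oscillation step (ii) to be the main obstacle, since the pointwise maximum principle at $\min_{V}u$ only controls the differences $u(y)-u(\min)$ for neighbors $y$ of the minimum vertex, and a Poincar\'e–test function argument is needed to promote the $L^{2}$ gradient bound into a genuine oscillation bound across graphs of diameter greater than one.
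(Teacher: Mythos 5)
Your proof follows essentially the same route as the paper: integrate the equation to get $\lambda\int_V g(u)\,d\mu=-\int_V f\,d\mu$, extract an upper bound on $\max_V u$ from that identity, deduce a pointwise bound on $\Delta u$, control the oscillation by a Poincar\'e-type (shortest-path) inequality, and rule out $\max_V u$ being very negative by contradiction with the same identity. One claim is false as stated: $g(t)=e^t(e^t-\sigma)^{2p+1}$ is \emph{not} strictly increasing on $\mathbb{R}$ when $\sigma>0$; with $s=e^t$ one computes $\frac{d}{ds}\bigl[s(s-\sigma)^{2p+1}\bigr]=(s-\sigma)^{2p}\bigl[(2p+2)s-\sigma\bigr]<0$ for $0<s<\sigma/(2p+2)$, and indeed your own bound $g\ge -c_p$ would be vacuous for an increasing function with $g(-\infty)=0$. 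This does not damage the argument: the step ``$|g(u(x))|\le C_1$ implies $u(x)\le M_1$'' needs only that $g(t)\to+\infty$ as $t\to+\infty$ uniformly in $\sigma\in[0,1]$, which holds because $g(t)\ge e^t(e^t-1)^{2p+1}\ge 2^{-(2p+1)}e^{(2p+2)t}$ for $t\ge\ln 2$. With that one-line repair the proof is correct; your oscillation step (testing the equation against $u-\min_V u$ and using $\|\nabla v\|_{L^2}^2\le C_3\max_V v$ together with $\max_V v\le C_4\|\nabla v\|_{L^2}$) is a harmless variant of the paper's computation with $u-\bar u$ and the spectral gap $\lambda_1$, and both ultimately rest on the same shortest-path inequality on the finite connected graph.
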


Denote by $L=L^{\infty}(V)$ and define a map $\mathcal{F}=: L \rightarrow L$ by
\begin{equation}\label{cfd}
\mathcal{F}(u)=-\Delta u+\lambda e^u\left(e^u-1\right)^{2p+1}+f .
\end{equation}  Using Theorem \ref{th1}, we can calculate the topological degree of $\mathcal{F}$, utilizing its property of homotopic invariance. 
\begin{theorem}\label{th2} Let $(V, E)$ denote a connected, finite graph with symmetric weights. Consider a map \( \mathcal{F}: L \rightarrow L \) as defined in equation (\ref{cfd}). Assume \( \lambda \int_V f d \mu \neq 0 \). Under this assumption, the following deduction  holds: There exists a sufficiently large constant \( R_0 > 0 \), such that for all \( R \geq R_0 \), the degree of \( \mathcal{F} \) within the ball \( B_R \) at the origin is determined as follows:
$$
\operatorname{deg}\left(\mathcal{F}, B_R, 0\right)=\left\{\begin{array}{lll}
	1 & \text { if } & \lambda>0, \int_V f d \mu<0, \\
	0 & \text { if } & \lambda \int_V f d \mu>0 ,\\
	-1 & \text { if } & \lambda<0, \int_V f d \mu>0.
\end{array}\right.
$$
Here, \( B_R = \left\{ u \in L : \|u\|_{L^{\infty}(V)} < R \right\} \) is defined as a ball in the space \( L \).
\end{theorem}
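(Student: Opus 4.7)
The plan is to chain together a sequence of homotopies on the Brouwer degree, exploiting the finite dimensionality of $L = L^\infty(V) \cong \mathbb{R}^{|V|}$ and using Theorem~\ref{th1} at each stage to prevent zeros from escaping $\partial B_R$. First I form the family $\mathcal{F}_\sigma(u) = -\Delta u + \lambda e^u (e^u - \sigma)^{2p+1} + f$ for $\sigma \in [0,1]$, which connects $\mathcal{F}_1 = \mathcal{F}$ to $\mathcal{F}_0(u) = -\Delta u + \lambda e^{(2p+2)u} + f$. By Theorem~\ref{th1} every solution of $\mathcal{F}_\sigma(u) = 0$ satisfies $\|u\|_{L^\infty(V)} \leq C = C(\Lambda, V)$ uniformly in $\sigma$; taking $R_0 > C$ and invoking homotopy invariance yields $\deg(\mathcal{F}, B_R, 0) = \deg(\mathcal{F}_0, B_R, 0)$ for all $R \geq R_0$. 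In the case $\lambda \int_V f\, d\mu > 0$, integrating $\mathcal{F}_0(u) = 0$ and using $\int_V \Delta u \, d\mu = 0$ gives $\lambda \int_V e^{(2p+2)u}\, d\mu = -\int_V f \, d\mu$, whose two sides carry incompatible signs; hence $\mathcal{F}_0$ has no zero and the degree vanishes.

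In the remaining cases $\lambda \int_V f\, d\mu < 0$, I next deform $f$ to its mean $\bar f = |V|^{-1} \int_V f\, d\mu$ along $f_s = (1-s)f + s\bar f$. Both $\int_V f_s\, d\mu = \int_V f\, d\mu$ (preserved) and $\|f_s\|_{L^\infty(V)} \leq 2\Lambda$ remain controlled, so Theorem~\ref{th1} with a slightly enlarged $\Lambda$ keeps the homotopy admissible, reducing the question to the degree of $\widetilde{\mathcal{F}}(u) = -\Delta u + \lambda e^{(2p+2)u} + \bar f$. This map has the explicit constant zero $u_\star = (2p+2)^{-1}\log(-\bar f/\lambda)$. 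For $\lambda > 0$ and $\bar f < 0$ the functional $J(u) = \tfrac12 \int_V \|\nabla u\|^2\, d\mu + (\lambda/(2p+2)) \int_V e^{(2p+2)u}\, d\mu + \bar f \int_V u\, d\mu$ is strictly convex (the exponential term is convex because $\lambda > 0$) and coercive, so $u_\star$ is its unique critical point; the linearization $D\widetilde{\mathcal{F}}(u_\star) = -\Delta - (2p+2)\bar f\, I$ is positive definite (since $-(2p+2)\bar f > 0$), its Jacobian determinant is positive, and the total degree is $+1$.

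The subtle case is $\lambda < 0$, $\bar f > 0$, where $\widetilde{\mathcal{F}}$ need not have a unique zero and the linearization at $u_\star$ is indefinite. My plan is to continue the homotopy by shrinking $\bar f$ through positive values to a small $c_\star$ with $(2p+2)c_\star < \nu_1$, where $\nu_1$ is the first non-zero eigenvalue of $-\Delta$ on $(V,E)$; enlarging $\Lambda$ further keeps Theorem~\ref{th1} available and the homotopy admissible. At $c_\star$ the linearization $-\Delta - (2p+2)c_\star\, I$ at the constant solution has exactly one negative eigenvalue (from the $\nu_0 = 0$ mode, contributing $-(2p+2)c_\star < 0$) while $\nu_i - (2p+2)c_\star > 0$ for $i \geq 1$, so its determinant is negative and the local degree is $-1$. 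To identify the total degree with this local contribution I would show uniqueness at $c_\star$ by substituting $v = u - u_\star$ in $-\Delta v = c_\star(e^{(2p+2)v} - 1)$, multiplying by $v$, integrating, and combining the integral identity $\overline{e^{(2p+2)v}} = 1$ with the Poincar\'e inequality $\int_V \|\nabla v\|^2\, d\mu \geq \nu_1 \int_V (v - \bar v)^2\, d\mu$ and the a priori $L^\infty$ bound from Theorem~\ref{th1} to force $v \equiv 0$.

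The hardest step is precisely this uniqueness argument: because the $L^\infty$ constant from Theorem~\ref{th1} generally grows as $c_\star \to 0^+$, the exponential factor controlling the nonlinear remainder $c_\star(e^{(2p+2)v} - 1) - (2p+2)c_\star v$ must be calibrated carefully against $\nu_1$. Should a direct calibration prove awkward, one can read the signed total off a Morse-theoretic count of the critical points of $J$ as $\bar f$ crosses the thresholds $\nu_i/(2p+2)$, using the fact that the bifurcations at these crossings produce paired critical points whose local degrees cancel, keeping the overall sum at $-1$.
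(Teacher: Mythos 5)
Your overall architecture is sound and genuinely different from the paper's in its second stage. Both proofs start with the homotopy in $\sigma$ and dispose of the case $\lambda\int_V f\,d\mu>0$ by integrating the limiting equation; but where you deform $f$ to its mean $\bar f$ (and then shrink $\bar f$), the paper instead scales $f$ to $\epsilon f$ via $\mathcal{G}_\epsilon(u,t)=-\Delta u+\lambda e^{(2p+2)u}+(t+(1-t)\epsilon)f$, which forces every solution to satisfy $e^{2(p+1)u_\epsilon}\le \Lambda^2\epsilon/\mu_0$ and treats both signs of $\lambda$ in one stroke (at the cost of needing the Kazdan--Warner existence results of Grigor'yan--Lin--Yang to know a zero exists at all). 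Your reduction to a constant right-hand side makes existence trivial (the constant $u_\star$) and your convexity argument settles $\lambda>0$, $\bar f<0$ cleanly; the linearization computations and the sign counts in both cases are correct.

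The genuine gap is the uniqueness of the zero in the case $\lambda<0$, $\bar f>0$ after shrinking to $c_\star$: without it you only have the local degree at $u_\star$, not the total degree, and neither of your two proposed routes closes it. As you yourself observe, the $L^\infty$ constant of Theorem \ref{th1} degenerates as $c_\star\to 0^+$, so it cannot calibrate the exponential factor against $\nu_1$; and the ``Morse-theoretic count with cancelling paired critical points'' is an assertion, not an argument (a priori the extra zeros need not appear in cancelling pairs, and controlling bifurcations as $\bar f$ crosses $\nu_i/(2p+2)$ is at least as hard as the uniqueness you are trying to avoid). The fix is not Theorem \ref{th1} but a direct integration of the equation: any solution of $-\Delta u+\lambda e^{(2p+2)u}+c_\star=0$ satisfies $|\lambda|\int_V e^{(2p+2)u}\,d\mu=c_\star|V|$, hence $e^{(2p+2)u(x)}\le c_\star|V|/(|\lambda|\mu_0)$ pointwise, \emph{uniformly} as $c_\star\to0$ after the normalization $v=u-u_\star$ (indeed $e^{(2p+2)v}\le |V|/\mu_0$). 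Then for two solutions $u,\varphi$ one has $\|\Delta(u-\varphi)\|_{L^\infty(V)}\le C c_\star\|u-\varphi\|_{L^\infty(V)}$, while $\int_V\bigl(e^{(2p+2)u}-e^{(2p+2)\varphi}\bigr)d\mu=0$ gives $\min_V(u-\varphi)\le 0\le\max_V(u-\varphi)$, so $\|u-\varphi\|_{L^\infty(V)}\le\max_V(u-\varphi)-\min_V(u-\varphi)$; combining with the gradient--Poincar\'e estimate (\ref{mmb}) yields $\max_V(u-\varphi)-\min_V(u-\varphi)\le C'c_\star\bigl(\max_V(u-\varphi)-\min_V(u-\varphi)\bigr)$, which forces $u\equiv\varphi$ once $c_\star$ is small. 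This is exactly the contraction the paper runs in Section 3 (equations (\ref{uva})--(\ref{dai})) with $\epsilon$ in place of your $c_\star$; with it inserted, your proof is complete.
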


Applying the aforementioned topological degree, our findings regarding the existence within the Chern-Simons Higgs model can be summarized as follows:
\begin{theorem}\label{th3}

Let $G=(V, E)$ be a connected finite graph  endowed with symmetric weights. The following results are established:

(a) If \( \lambda \int_V f \, d \mu < 0 \), then equation (\ref{meq}) admits a solution.

(b) If \( \lambda \int_V f \, d \mu > 0 \), two distinct subcases emerge:

(i) For \( \int_V f \, d \mu > 0 \), there exists a real number \( \Lambda^* > 0 \) such that equation (\ref{meq}) yields at least two distinct solutions for \( \lambda > \Lambda^* \), no solution for \( 0 < \lambda < \Lambda^* \), and at least one solution for \( \lambda = \Lambda^* \).

(ii) For \( \int_V f \, d \mu < 0 \), a real number \( \Lambda_* < 0 \) exists where equation (\ref{meq}) possesses at least two distinct solutions for \( \lambda < \Lambda_* \), no solution for \( \Lambda_* < \lambda < 0 \), and at least one solution for \( \lambda = \Lambda_* \).
\end{theorem}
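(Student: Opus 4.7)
For this I would invoke Theorem~\ref{th2} directly. The hypothesis $\lambda\int_V f\,d\mu<0$ falls into precisely the two cases for which Theorem~\ref{th2} gives $\deg(\mathcal{F},B_R,0)=\pm 1\neq 0$ for large $R$, and the existence property of the Brouwer degree then supplies a zero of $\mathcal{F}$ in $B_R$, i.e.\ a solution of (\ref{meq}).

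\textbf{Necessary condition in (b)(i).} I would focus on the subcase $\int_V f\,d\mu>0$, $\lambda>0$; the other is symmetric. Writing $g(t)=e^t(e^t-1)^{2p+1}$ and integrating (\ref{meq}) over $V$ yields $\int_V f\,d\mu=-\lambda\int_V g(u)\,d\mu$. A short computation shows that $g$ attains its absolute minimum $g_{\min}=-\tfrac{(2p+1)^{2p+1}}{(2p+2)^{2p+2}}$ at $t=-\ln(2p+2)$, which forces $\lambda\geq\tfrac{\int_V f\,d\mu}{|g_{\min}|\,\mu(V)}>0$ whenever a solution exists. This shows $\Lambda^{*}:=\inf\{\lambda>0:(\ref{meq})\text{ is solvable}\}$ is strictly positive and establishes the non-existence statement for $0<\lambda<\Lambda^{*}$.

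\textbf{Two solutions for $\lambda>\Lambda^{*}$.} The plan is to exploit the variational structure via the functional
\[ J_\lambda(u)=\tfrac12\int_V|\nabla u|^2\,d\mu+\tfrac{\lambda}{2p+2}\int_V(e^u-1)^{2p+2}\,d\mu+\int_V fu\,d\mu, \]
whose Euler--Lagrange operator is $\mathcal{F}$. Analysing $J_\lambda$ on constants exhibits, for every $\lambda$ above the threshold of the previous step, a local-max/local-min pair of critical points, which suggests that on the whole space $J_\lambda$ likewise possesses a local minimizer $u_\lambda^{(1)}$. I would produce it by bootstrapping from a solution at some $\lambda_0\in(\Lambda^{*},\lambda)$, localizing $J_\lambda$ to a suitable ball, and applying the direct method---compactness being automatic because $V$ is finite. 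The resulting $u_\lambda^{(1)}$ is an isolated critical point of local Brouwer degree $+1$. But Theorem~\ref{th2} asserts $\deg(\mathcal{F},B_R,0)=0$ in this regime, so by excision and additivity of the degree---equivalently, by comparing the critical groups of $u_\lambda^{(1)}$ with the trivial total Morse polynomial---there must exist a second critical point $u_\lambda^{(2)}\neq u_\lambda^{(1)}$ of local degree $-1$. One could alternatively realize $u_\lambda^{(2)}$ as a mountain-pass solution linking $u_\lambda^{(1)}$ to the low-energy region where $J_\lambda\to-\infty$.

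\textbf{Threshold $\lambda=\Lambda^{*}$ and subcase (ii).} For the threshold, I would take $\lambda_n\downarrow\Lambda^{*}$ with solutions $u_n$, use Theorem~\ref{th1} to obtain a uniform $L^\infty$ bound, and extract a convergent subsequence---trivial since $V$ is finite---whose limit solves (\ref{meq}) at $\Lambda^{*}$. Subcase (ii) runs in an entirely parallel manner, with signs reversed and using the $\deg=-1$ and $\deg=0$ branches of Theorem~\ref{th2} for $\lambda<0$. The hardest step is the third paragraph: because $g$ is non-monotone a plain sub/super-solution iteration is delicate, and certifying that the first solution is a genuine local minimizer with Brouwer index $+1$ requires careful localization of the variational problem along the correct branch.
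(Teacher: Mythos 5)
Your overall architecture matches the paper's: degree theory for part (a), a variational functional plus critical groups versus the vanishing degree for the multiplicity, and compactness at the threshold. But there are two genuine gaps in part (b), both concentrated exactly where you admit the argument is "delicate," and the paper resolves them with a specific sub/super-solution device that your sketch does not supply. First, you never establish that the solvable set $\{\lambda>0:\ (\ref{meq})\text{ has a solution}\}$ is nonempty, so your $\Lambda^{*}=\inf\{\lambda>0:(\ref{meq})\text{ is solvable}\}$ could be $+\infty$. The paper's Lemma~\ref{l41} closes this by exhibiting explicit barriers: for $\lambda>0$, $\bar f>0$ large, the constants $\ln\frac12$ and a large $A$ satisfy $\mathcal{L}_\lambda\ln\frac12<0<\mathcal{L}_\lambda A$, and minimizing $\mathcal{J}_\lambda$ over the order interval $\{\ln\frac12\le u\le A\}$ produces an interior minimizer (the boundary is excluded by a one-sided derivative computation at an extremal vertex, using the sign of $\Delta u_\lambda$ at a minimum point). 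Second, and more seriously, your statement "two solutions for $\lambda>\Lambda^{*}$" requires that solvability at some $\lambda_0$ propagates to a \emph{local minimizer} at every larger $\lambda$; otherwise the solvable set could fail to be an interval, and the critical-group argument has no local minimizer to start from. Your "bootstrapping from a solution at $\lambda_0$, localizing $J_\lambda$ to a suitable ball" does not say how to choose the ball, and the non-monotonicity of $g(t)=e^t(e^t-1)^{2p+1}$ is precisely the obstruction. The paper's Lemma~\ref{l42} supplies the missing idea: take the lower barrier to be the \emph{translate} $u_{\lambda_1}+\ln\frac{\lambda_1}{\lambda}$ of the known solution, for which
$$
\lambda e^{u}\bigl(e^{u}-1\bigr)^{2p+1}\Big|_{u=u_{\lambda_1}+\ln\frac{\lambda_1}{\lambda}}=\lambda_1 e^{u_{\lambda_1}}\Bigl(\tfrac{\lambda_1}{\lambda}e^{u_{\lambda_1}}-1\Bigr)^{2p+1}<\lambda_1 e^{u_{\lambda_1}}\bigl(e^{u_{\lambda_1}}-1\bigr)^{2p+1},
$$
so the translate is a strict subsolution and the minimizer over $\{u_{\lambda_1}+\ln\frac{\lambda_1}{\lambda}\le u\le A\}$ is interior, hence a genuine local minimum of $\mathcal{J}_\lambda$. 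Without this (or an equivalent construction) your proof of the two-solution claim, and even of the fact that $(0,\Lambda^{*})$ is exactly the non-existence range, is incomplete.

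Two smaller points. Your necessary condition $\lambda\ge \int_V f\,d\mu/\bigl(|g_{\min}|\,\mu(V)\bigr)$ is the same computation as the paper's Lemma on the lower bound for $\Lambda^{*}$ and is fine. For the second solution, your degree/critical-group comparison is the paper's argument (assume the local minimizer is the unique critical point, get $\deg(\mathcal{F},B_R,0)=\sum_q(-1)^q\operatorname{rank}\mathsf{C}_q=1$, contradicting $\deg=0$ from Theorem~\ref{th2}); the mountain-pass alternative you mention would additionally require checking a linking geometry that you have not set up, so the critical-group route is the one to keep.
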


This paper is organized as follows: Section 2 introduces  a priori estimate for solutions of equation \eqref{cs2}, elaborated in Theorem \ref{th1}. Section 3 involves the calculation of the topological degree of the mapping $\mathcal{F}\colon L \to L$, as established in Theorem \ref{th2}. Section 4 focuses on proving the existence result presented in Theorem \ref{th3}. We  draw inspiration from \cite{ huang2020existence,MR4416135, CHAO2023126787,li2023topological}.

\section{Proof of Theorem \ref{th1}}

In this section, we present the proof of Theorem \ref{th1}. The proof is based on priori estimates. 
	
Suppose that  $u$ is a solution of (\ref{cs2}).  By  integration of both sides of (\ref{cs2}) on $V$, we get 
\begin{equation} \label{sto}
	0=\int_V \Delta u d \mu=\lambda \int_V e^u\left(e^u-\sigma\right)^{2p+1} d \mu+\int_V f d \mu .
\end{equation}
Now we claim  that $u$ admits a uniform upper bound. Obviously, if $\max _V u<0$, then $0$ is an upper  bound. Hence, we assume that  $\max _V u>0$. There holds that 
	$$
	\left|\int_{u<0} e^u\left(e^u-\sigma\right)^{2p+1} d \mu\right| \leq |V|,
	$$
which  together with (\ref{sto}) yields that 
$$
	\int_{u \geq 0} e^u\left(e^u-\sigma\right)^{2p+1} d \mu \leq |V|+\frac{1}{|\lambda|}\left|\int_V f d \mu\right|\leq c_1:=|V|+\Lambda^2 .
$$
Observing that 
	$$
	\int_{u \geq 0} e^u\left(e^u-\sigma\right)^{2p+1} d \mu=\sum_{x \in V, u(x) \geq 0} \mu(x) e^{u(x)}\left(e^{u(x)}-\sigma\right)^{2p+1} \geq \mu_0 \left(e^{\max _V u}-\sigma\right)^{2p+1}
	$$
where $\mu_0=\min _{x \in V} \mu(x)>0$,  we conclude that 
\begin{equation}\label{mbo}\max _V u \leq \ln \left( 1+\left( \f{c_1}{\mu_0}\right)^{\f{1}{2p+1}}\right)\end{equation}
and confirm  the claim.

Next we prove that $u$ has a uniform lower bound. By (\ref{cs2}) and (\ref{mbo}), we obtain 
\begin{equation}
\begin{aligned}\label{lpa}|\Delta u(x)| & \leq|\lambda|\left|e^{u(x)}(e^{u(x)}-\sigma)^{2p+1}\right|+|f(x)| \\ & \leq |\lambda|\left|(e^{u(x)}+1)^{2p+2}\right|+|f(x)| \\ & \leq|\lambda|\left( 2+\left( \f{c_1}{\mu_0}\right)^{\f{1}{2p+1}}\right)^{2p+2}+\|f\|_{L^{\infty}(V)} \\ &
\leq \Lambda\left( 2+\left( \f{c_1}{\mu_0}\right)^{\f{1}{2p+1}}\right)^{2p+2}+\Lambda\\
 &  =: c_2 .
\end{aligned}
\end{equation}
Noting that $G$ is finite, let  $V=\left\{x_1, \cdots, x_{m}\right\}.$ Assume that $ u\left(x_1\right)=\min _V u, u\left(x_{m}\right)=\max _V u$, and without loss of generality $x_1 x_2, x_2 x_3, \cdots, x_{m-1} x_{m}$ is the shortest path connecting $x_1$ and $x_{m}$. Denote by  $w_0=\min _{x \in V, y \sim x} w_{x y}>0$. It is easy to see that 
\begin{equation}\label{lpe}
\begin{aligned}
	0 \leq u\left(x_m\right)-u\left(x_{1}\right) & \leq \sum_{j=1}^{m-1}\left|u(x_{j+1})-u(x_{j})\right| \\
	& \leq \frac{\sqrt{m-1}}{\sqrt{w_0}}\left(\sum_{j=1}^{m-1} w_{x_j x_{j+1}}\left(u(x_{j+1})-u(x_{j})\right)^2\right)^{1 / 2} \\
	& \leq \frac{\sqrt{m-1}}{\sqrt{w_0}}\left(\int_V|\nabla u|^2 d \mu\right)^{1 / 2}.
\end{aligned}
\end{equation}
Denote by  $\bar{u}=\frac{1}{|V|} \int_V u d \mu$ and  by $\lambda_1=\inf _{\bar{u}=0, \int_V u^2 d \mu=1} \int_V|\nabla u|^2 d \mu>0$.  Integration by parts on $V$ gives 
$$
\begin{aligned}
	\int_V|\nabla u|^2 d \mu & =-\int_V(u-\bar{u}) \Delta u d \mu \\
	& \leq\left(\int_V(u-\bar{u})^2 d \mu\right)^{1 / 2}\left(\int_V(\Delta u)^2 d \mu\right)^{1 / 2} \\
	& \leq\left(\frac{1}{\lambda_1} \int_V|\nabla u|^2 d \mu\right)^{1 / 2}\left(\int_V(\Delta u)^2 d \mu\right)^{1 / 2},
\end{aligned}
$$
which implies 
\begin{equation}\label{gre}\int_V|\nabla u|^2 d \mu \leq \frac{1}{\lambda_1} \int_V(\Delta u)^2 d \mu \leq \frac{1}{\lambda_1}\|\Delta u\|_{L^{\infty}(V)}^2|V|.
\end{equation}
In view of (\ref{lpe}) and (\ref{gre}), we arrive at 
\begin{equation}\label{mmb}
\max _V u-\min _V u \leq \sqrt{\frac{(m-1)|V|}{w_0 \lambda_1}}\|\Delta u\|_{L^{\infty}(V)} .
\end{equation}
Furthermore, by (\ref{lpa}), we have 
\begin{equation}\label{mme}
\max _V u-\min _V u \leq c_3:=c_2\sqrt{\frac{(m-1)|V|}{w_0 \lambda_1}}.
\end{equation}
In view of (\ref{sto}), we obtain 
\begin{equation}\label{les}
\left |\int_V e^u\left(e^u-\sigma\right)^{2p+1} d \mu\right|=\left|-\frac{1}{\lambda} \int_V f d \mu\right|\geq \frac{1}{\Lambda^2}.
\end{equation}
We claim that
 \begin{equation}\label{lwe}
\max _V u>-\alpha:=\ln\min \left\{1, \frac{1}{4\Lambda^2|V|}\right\}
. 
\end{equation} 
We prove it by contradiction. Suppose   $
\max _V u \leq-\alpha.
$ 
Noting that if $e^u\geq 0$,  then $\left| e^u(e^u-\sigma)^{2p+1}\right|\leq e^{(2p+2)u}$, while $\left| e^u(e^u-\sigma)^{2p+1}\right|\leq e^{u}$ if $e^u\leq 0$, hence there always holds that 
\begin{equation}\label{eel}
	\left| e^u(e^u-\sigma)^{2p+1}\right|\leq \left( e^{(2p+2)u}+e^u\right).
\end{equation}
By (\ref{les}) and (\ref{eel}), we obtain that 
$$
\begin{aligned}
	\frac{1}{\Lambda^2} & \leq \left|\int_V e^u\left(e^u-\sigma\right)^{2p+1} d \mu\right| \\
	& \leq \int_V\left(e^{(2p+2)u}+e^u\right) d \mu \\
	& \leq\left(e^{(2p+2) \max _V u}+e^{\max _V u}\right)|V| \\
	& \leq 2 e^{-\alpha}|V| \\
	& <\frac{1}{2\Lambda^2},
\end{aligned}
$$
which yields a contradiction.  Hence the claim holds. Combing (\ref{mbo}), (\ref{mme}) and (\ref{lwe}), we get $$-\alpha-c_3 \leq \min _V u \leq \max _V u \leq \ln \left( 1+\left( \f{c_1}{\mu_0}\right)^{\f{1}{2p+1}}\right).$$  We conclude that $|u(x)| \leq C$, where $C$ depends only on $ \Lambda $ and  $ V $.

\section{Proof of Theorem \ref{th2}}
In this section, we will prove Theorem \ref{th2} by the topological degree.  Consider the set \( V = \left\{ x_1, \cdots, x_{m} \right\} \). Let us define \( L \) as \( L^{\infty}(V) \). In this context, \( L \) can be identified with the Euclidean space \( \mathbb{R}^{m} \). To avoid any ambiguity, we introduce a mapping \( \mathcal{F}: L \times [0,1] \rightarrow L \) defined by the following equation:
$$
\mathcal{F}(u, \sigma)=-\Delta u+\lambda e^u\left(e^u-\sigma\right)^{2p+1}+f, \quad(u, \sigma) \in L \times[0,1] .
$$
In this formulation, \( \mathcal{F} \) encapsulates the interaction within the space \( L \) and the interval $[0,1]$.

Given a fixed real number \( \lambda \) and a fixed function \( f \), and considering \( \lambda \bar{f} \neq 0 \), there necessarily exists a substantial number \( \Lambda > 0 \) satisfying the following conditions:
\begin{equation}\label{slu}
\Lambda^{-1} \leq |\lambda| \leq \Lambda, \quad \Lambda^{-1} \leq \left|\int_V f d \mu\right| \leq \Lambda, \quad \|f\|_{L^{\infty}(V)} \leq \Lambda.
\end{equation}
Here, and in subsequent discussions, \( \bar{f} \) represents the integral mean of the function \( f \). Based on Theorem \ref{th1}, it can be inferred that there is a constant \( \bar{R} > 0 \), and the graph \( V \), such that for every \( \sigma \in [0,1] \), all solutions to the equation \( \mathcal{F}(u, \sigma) = 0 \) will comply with the condition \( \|u\|_{L^{\infty}(V)} < \bar{R} \). Let \( B_r \subset L \) denote a ball centered at the origin in \( L \) with radius \( r \), and \( \partial B_r = \left\{ u \in L : \|u\|_{L^{\infty}(V)} = r \right\} \) denote its boundary. Consequently, 
for all \( \sigma \in [0,1] \) and \( R \geq \bar{R} \), it is observed that:
$$
0 \notin \mathcal{F}\left(\partial B_R, \sigma\right).
$$
Invoking the principle of homotopic invariance of topological degree, we deduce:
\begin{equation}\label{bde}
\operatorname{deg}\left(\mathcal{F}(\cdot, 1), B_R, 0\right) = \operatorname{deg}\left(\mathcal{F}(\cdot, 0), B_R, 0\right), \quad \forall R \geq \bar{R} .
\end{equation}
For a given \( \epsilon > 0 \), we introduce an alternative smooth map \( \mathcal{G}_\epsilon: L \times [0,1] \rightarrow L \) defined as:

$$
\mathcal{G}_\epsilon(u, t) = -\Delta u + \lambda e^{(2p+2)u} + (t + (1 - t) \epsilon) f, \quad (u, t) \in L\times [0,1] .
$$
It is noteworthy that:

$$
\min \{1, \epsilon\} \left| \int_V f d \mu \right|\leq\left| (t + (1 - t) \epsilon) \int_V f d \mu \right| \leq \left| \int_V f d \mu \right| , \quad \forall t \in [0,1] .
$$
Applying Theorem \ref{th1} once more, we identify a constant \( R_\epsilon > 0 \), dependent exclusively on \( \epsilon \), \( \Lambda \), and the graph \( V \). This ensures that all solutions \( u \) to \( \mathcal{G}_\epsilon(u, t) = 0 \) satisfy \( \|u\|_{L^{\infty}(V)} < R_\epsilon \) for all \( t \in [0,1] \). Consequently, this leads to the implication:

$$
0 \notin \mathcal{G}_\epsilon\left(\partial B_{R_{\epsilon}}, t\right), \quad \forall t \in [0,1] .
$$
Therefore, the principle of homotopic invariance in topological degree yields:
\begin{equation}\label{gde}
\operatorname{deg}\left(\mathcal{G}_\epsilon(\cdot, 1), B_{R_\epsilon}, 0\right) = \operatorname{deg}\left(\mathcal{G}_\epsilon(\cdot, 0), B_{R_\epsilon}, 0\right).
\end{equation}
To compute \( \operatorname{deg}\left(\mathcal{G}_\epsilon(\cdot, 0), B_{R_\epsilon}, 0\right) \), it is essential to examine the solvability of the equation:
\begin{equation}\label{mge}
\mathcal{G}_\epsilon(u, 0) = -\Delta u + \lambda e^{(2p+2)u} + \epsilon f = 0.
\end{equation}

We consider two cases: (i) $\lambda \bar{f}<0$; (ii) $\lambda \bar{f}>0$.

Next we deal the case $\lambda \bar{f}<0$.  For any given \( \epsilon > 0 \), let \( v_\epsilon \) denote the unique solution to the equation:
$$
\left\{\begin{array}{l}
	\Delta v = \epsilon f - \epsilon \bar{f} \text{ in } V \\
	\bar{v} = 0 .
\end{array}\right.
$$
Consequently, letting $w=u-v_\epsilon$, we reduce  (\ref{mge})  to the following equation 
$$
\Delta w = \lambda e^{2(p+1) v_\epsilon} e^{2(p+1) w} + \epsilon \bar{f} .
$$
Set $W=2(p+1)w$. Then we get 
\begin{equation}\label{nwe}
\Delta W = 2\lambda(p+1) e^{2(p+1) v_\epsilon} e^{W} + 2(p+1)\epsilon \bar{f} .
\end{equation}
By Theorems 2 and 4 in \cite{grigor2016kazdan}, we know that there $\epsilon_1>0$ such that (\ref{nwe}) admits a solution  if $0<\epsilon<\epsilon_1$ as $\lambda \bar{f}<0$. Hence, (\ref{mge}) has a solution. 

Noting (\ref{slu}) and integrating both sides of (\ref{mge}) on $V$, we get 
$$
\int_V e^{2(p+1) u_\epsilon} d \mu = -\frac{\epsilon}{\lambda} \int_V f d \mu \leq \Lambda^2 \epsilon,
$$
which implies that
\begin{equation}\label{u2p}
e^{2(p+1) u_\epsilon(x)} \leq \frac{\Lambda^2}{\mu_0} \epsilon, \quad \forall x \in V,
\end{equation}
where \( \mu_0 \) is defined as the minimum value of \( \mu(x) \) over all \( x \in V \). Additionally, it is imperative to establish the uniqueness of the solution. Assume \( \varphi \) represents an arbitrary solution of equation (\ref{mge}), thereby satisfying
\begin{equation}\label{ans}
\Delta \varphi = \lambda e^{2(p+1) \varphi} + \epsilon f .
\end{equation}
By the arguments as before, we have 
\begin{equation}\label{phe}
	\int_V e^{2(p+1) \varphi} d \mu \leq \Lambda^2 \epsilon, \quad e^{2(p+1) \varphi(x)} \leq \frac{\Lambda^2}{\mu_0} \epsilon  \quad\text { for all } \quad x \in V. 
\end{equation} 
It follows from (\ref{mge}) and (\ref{ans}) that 
$$
0 = \int_V \Delta(u_\epsilon - \varphi) d \mu = \lambda \int_V (e^{2(p+1) u_\epsilon} - e^{2(p+1) \varphi}) d \mu,
$$
which subsequently leads to
$$
\min_V (u_\epsilon - \varphi) \leq 0 \leq \max_V (u_\epsilon - \varphi) .
$$
As a direct consequence, we can infer that
\begin{equation}\label{uva}
|u_\epsilon - \varphi| \leq \max_V (u_\epsilon - \varphi) - \min_V (u_\epsilon - \varphi) .
\end{equation}
Further, by synthesizing equations (\ref{mge}), (\ref{u2p}), (\ref{ans}), and (\ref{phe}), we deduce
\begin{equation}\label{dai}
\begin{aligned}
	|\Delta(u_\epsilon - \varphi)(x)| &= \left|\lambda \left(e^{2(p+1) u_\epsilon(x)} - e^{2(p+1) \varphi(x)}\right)\right| \\
	&\leq 2(p+1) \Lambda \left(e^{2(p+1) u_\epsilon(x)} + e^{2(p+1) \varphi(x)}\right) |u_\epsilon(x) - \varphi(x)| \\
	&\leq \frac{4(p+1) \Lambda^3}{\mu_0} \epsilon |u_\epsilon(x) - \varphi(x)| .
\end{aligned}
\end{equation}
Integrating the results from equations (\ref{mmb}), (\ref{uva}), and (\ref{dai}), we arrive at the following inequality:
\begin{equation}\label{mmv}
\max_V (u_\epsilon - \varphi) - \min_V (u_\epsilon - \varphi) \leq \sqrt{\frac{(m- 1) |V|}{w_0 \lambda_1}} \frac{4(p+1) \Lambda^3}{\mu_0} \epsilon (\max_V (u_\epsilon - \varphi) - \min_V (u_\epsilon - \varphi)) .
\end{equation}
We then select
$$
\epsilon_0 = \min \left\{\epsilon_1, \sqrt{\frac{w_0 \lambda_1}{(m- 1) |V|}} \frac{\mu_0}{8(p+1) \Lambda^3}\right\} .
$$
By choosing \( 0 < \epsilon < \epsilon_0 \), equation (\ref{mmv}) leads to the conclusion that \( \varphi \) is identically equal to \( u_\epsilon \) on \( V \). Consequently, this implies the uniqueness of the solution to equation (\ref{mge}). 

It is important to note that \( -\Delta: L \rightarrow L \) is a nonnegative definite symmetric operator. Its eigenvalues can be represented as:
$$
0 = \lambda_0 < \lambda_1 \leq \lambda_2 \leq \cdots \leq \lambda_{m - 1},
$$
where \( m \) denotes the total number of points in \( V \). Noting (\ref{u2p}), choose a sufficiently small $\epsilon $  such that the  unique solution \( u_\epsilon \) to (\ref{mge}) satisfying
$$
2(p+1)|\lambda| e^{2(p+1) u_\epsilon(x)} < \lambda_1 .
$$
A direct computation reveals that
$$
D \mathcal{G}_\epsilon(u_\epsilon, 0) = -\Delta + 2(p+1) \lambda e^{2(p+1) u_\epsilon} \mathrm{I}
$$
wherein the linear operator \( -\Delta \) is identified with the corresponding \( m\times m \) matrix and \( \mathrm{I} \) denotes the \( m \times m \) diagonal matrix \( \operatorname{diag}[1,1, \cdots, 1] \). Evidently,
\begin{equation}\begin{aligned}
\operatorname{deg}(\mathcal{G}_\epsilon(\cdot, 0), B_{R_\epsilon}, 0) =& \operatorname{sgn} \operatorname{det}(D \mathcal{G}_\epsilon(u_\epsilon, 0))\\
 = &\operatorname{sgn}\left\{2(p+1) \lambda e^{2(p+1) u_\epsilon(x)} \prod_{j=1}^{m-1}(\lambda_j + 2(p+1) \lambda e^{2(p+1) u_\epsilon(x)})\right\}\\
  = &\operatorname{sgn} \lambda.
\end{aligned}\end{equation}
This, in conjunction with equations (\ref{bde}) and (\ref{gde}), culminates in the following chain of equalities:
$$
\begin{aligned}
	\operatorname{deg}(\mathcal{F}(\cdot, 1), B_{R}, 0) & = \operatorname{deg}(\mathcal{F}(\cdot, 0), B_{R}, 0) \\
	& = \operatorname{deg}(\mathcal{G}_\epsilon(\cdot, 1), B_{R}, 0) \\
	& = \operatorname{deg}(\mathcal{G}_\epsilon(\cdot, 0), B_{R}, 0) \\
	& = \operatorname{sgn} \lambda,
\end{aligned}
$$
where $R\geq R_0:=\max\{ \bar{R},R_{\epsilon}\}$. Hence if  $\lambda>0, \int_V f d \mu<0$, $\operatorname{deg}\left(\mathcal{F}, B_R, 0\right)=1$ and if  $\lambda<0, \int_V f d \mu>0$, $\operatorname{deg}\left(\mathcal{F}, B_R, 0\right)=-1$. 

Now we consider the case  $\lambda \bar{f}>0$.  If (\ref{mge}) admits a solution $u_{\epsilon}$, it follows that 
$$
0=\int_V \Delta u_{\epsilon} d \mu=\lambda \int_V e^{2(p+1) u_{\epsilon}} d \mu+\epsilon \int_V f d \mu,
$$
which is impossible. Hence 
$$
\operatorname{deg}\left(\mathcal{F}(\cdot, 1), B_{R}, 0\right)=\operatorname{deg}\left(G_\epsilon(\cdot, 0), B_{R}, 0\right)=0 .
$$
We finish the proof of Theorem \ref{th2}. 

\section{Proof of Theorem \ref{th3}}

In this section, our aim is to establish the validity of Theorem \ref{th3} by utilizing the topological degree as outlined in Theorem \ref{th2}.

 Assume \( \lambda \bar{f} < 0 \). In light of Theorem \ref{th2}, we can identify a sufficiently large \( R_0 > 1 \) such that
$$
\operatorname{deg}(\mathcal{F}, B_{R_0}, 0) \neq 0 .
$$
Consequently, according to Kronecker's existence theorem, equation (\ref{meq}) is guaranteed to have a solution. Hence we prove (a) in Theorem \ref{th3}. 

In the subsequent discussions of this section, we shall proceed under the assumption that \( \lambda \bar{f} > 0 \). Our initial focus will be to demonstrate that equation (\ref{meq}) admits a local minimum solution when \( |\lambda| \) is large enough.

\begin{lemma}\label{l41}
For a sufficiently large  $|\lambda |$,  equation (\ref{meq}) is guaranteed to have a local minimum solution.
\end{lemma}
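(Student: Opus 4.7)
The plan is to apply the direct method to the natural energy associated with (\ref{meq}). Since $(e^u-1)^{2p+2}/(2p+2)$ is a primitive of $e^u(e^u-1)^{2p+1}$, define
\begin{equation*}
J_\lambda(u)=\f12\int_V|\na u|^2\,d\mu+\f{\lambda}{2p+2}\int_V(e^u-1)^{2p+2}\,d\mu+\int_V fu\,d\mu,\quad u\in L,
\end{equation*}
whose critical points in $L$ are exactly the solutions of (\ref{meq}). I will treat the subcase $\lambda>0$, $\bar f>0$; the subcase $\lambda<0$, $\bar f<0$ is handled by a parallel argument.

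First I locate a good reference point. Restricting $J_\lambda$ to constants $u\equiv c$ gives $h(c)=|V|\bigl[\f{\lambda}{2p+2}(e^c-1)^{2p+2}+c\bar f\bigr]$, whose derivative $h'(c)=|V|\bigl[\lambda e^c(e^c-1)^{2p+1}+\bar f\bigr]$ vanishes at exactly two negative points $c_1^\lambda<c_2^\lambda<0$ once $\lambda$ exceeds a threshold; $c_2^\lambda$ is a strict local minimum of $h$ with $c_2^\lambda\to 0^-$ as $\lambda\to\iy$. The diagonal nonlinear contribution to the Hessian of $J_\lambda$ at a constant $u\equiv c$ equals $\lambda e^c(e^c-1)^{2p}[(2p+2)e^c-1]$, and substituting the critical-point identity $\lambda e^{c_2^\lambda}(e^{c_2^\lambda}-1)^{2p+1}=-\bar f$ shows that this coefficient diverges to $+\iy$ as $\lambda\to\iy$. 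Hence one can fix a small $\rho>0$ independent of $\lambda$ such that, for $\lambda$ large, $D^2 J_\lambda\geq\alpha(\lambda)\,\mathrm{Id}$ on the closed ball $\overline{B_\rho(c_2^\lambda)}=\{u\in L:\|u-c_2^\lambda\|_{L^\iy(V)}\leq\rho\}$ with $\alpha(\lambda)\to\iy$.

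By continuity and compactness, $J_\lambda$ attains its infimum on $\overline{B_\rho(c_2^\lambda)}$ at a (by strict convexity, unique) point $u_\lambda$. It remains to force $u_\lambda$ into the interior. A Taylor expansion
\begin{equation*}
J_\lambda(u)-J_\lambda(c_2^\lambda)\geq\langle\na J_\lambda(c_2^\lambda),u-c_2^\lambda\rangle+\tfrac{\alpha(\lambda)}{2}\|u-c_2^\lambda\|_{L^2(\mu)}^2,
\end{equation*}
together with $\na J_\lambda(c_2^\lambda)=f-\bar f$ (since $c_2^\lambda$ is $h$-critical), gives on the sphere $\|u-c_2^\lambda\|_{L^\iy(V)}=\rho$ the bound $J_\lambda(u)-J_\lambda(c_2^\lambda)\geq -C\rho+\tfrac12\alpha(\lambda)\mu_0\rho^2>0$ once $\lambda$ is large enough. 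This forces $u_\lambda$ into the interior, so $\na J_\lambda(u_\lambda)=0$ and $u_\lambda$ is a local-minimum solution of (\ref{meq}).

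The main obstacle is the quantitative matching in this last step: one needs the $\lambda$-independent linear defect from $f\ne\bar f$ to be dominated by the diverging strong-convexity bound $\alpha(\lambda)$, while freely converting between $L^\iy(V)$ and $L^2(\mu)$ radii — routine on the finite graph but requiring careful bookkeeping.
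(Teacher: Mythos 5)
Your route is genuinely different from the paper's: the paper minimizes $\mathcal{J}_\lambda$ over the order interval $\{\ln\tfrac12\le u\le A\}$, using the sub/super-solution signs $\mathcal{L}_\lambda(\ln\tfrac12)<0<\mathcal{L}_\lambda A$ and the discrete maximum principle at a vertex where the minimizer touches a barrier to push it into the interior; no convexity is needed. Your argument instead relies on quantitative strong convexity near the constant $c_2^\lambda$, and this is where it breaks down for $p\ge 1$ (the statement covers every non-negative integer $p$). The diagonal Hessian coefficient $\lambda e^{u}(e^{u}-1)^{2p}\bigl[(2p+2)e^{u}-1\bigr]$ vanishes identically at $u=0$ when $p\ge 1$. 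Since $c_2^\lambda\to 0^-$ as $\lambda\to\infty$, any fixed ball $\overline{B_\rho(c_2^\lambda)}$ eventually contains the constant function $u\equiv 0$, where $D^2\mathcal{J}_\lambda=-\Delta$ has smallest eigenvalue $0$ (kernel the constants). Hence the asserted bound $D^2\mathcal{J}_\lambda\ge\alpha(\lambda)\,\mathrm{Id}$ on the ball forces $\alpha(\lambda)\le 0$; it cannot diverge. Your computation only shows the coefficient blows up \emph{at the single point} $c_2^\lambda$, and the Taylor-remainder step needs the bound along every segment from $c_2^\lambda$ to the sphere, i.e.\ on the whole ball.

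Shrinking $\rho$ with $\lambda$ does not rescue the argument. To keep the ball away from $u\equiv 0$ you need $\rho<|c_2^\lambda|\sim(\bar f/\lambda)^{1/(2p+1)}$, and then the best available convexity constant is $\alpha(\lambda)\lesssim\lambda\,|c_2^\lambda|^{2p}$, so $\alpha(\lambda)\rho\lesssim\lambda\,|c_2^\lambda|^{2p+1}\sim\bar f$ stays bounded; the final inequality $-C\rho+\tfrac12\alpha(\lambda)\mu_0\rho^2>0$, which requires $\alpha(\lambda)\rho>2C/\mu_0$ with $C$ controlled by $\|f-\bar f\|_{L^1(V)}$, cannot be guaranteed. (For $p=0$ the coefficient at $u=0$ equals $\lambda$ and your scheme does work, but that does not cover the lemma.) The paper's choice of the lower barrier $\ln\tfrac12$ is precisely what avoids this degeneracy: there $e^u-1=-\tfrac12$ is bounded away from zero, so $\mathcal{L}_\lambda(\ln\tfrac12)=-\lambda 2^{-(2p+2)}+f<0$ for large $\lambda$, and the interior location of the minimizer follows from the sign conditions and $\Delta u_\lambda(x_0)\ge 0$ at a minimum point, with no Hessian estimate at all.
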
 
\begin{proof}
We will analyze two distinct cases. The first case pertains to the conditions where \( \lambda > 0 \) and \( \bar{f} > 0 \). The second case involves situations where \( \lambda < 0 \) and \( \bar{f} < 0 \).
	
Initially, let's focus on the case where \( \lambda > 0 \) and \( \bar{f} > 0 \). We define the operator \( \mathcal{L}_\lambda \) as follows:
\begin{equation}\label{dfl}
\mathcal{L}_\lambda u = -\Delta u + \lambda e^u(e^u - 1)^{2p+1} + f .
\end{equation}
For the real numbers \( A \) and \( \lambda \), the following relationships are observed:
$$
\mathcal{L}_\lambda A = \lambda e^A(e^A - 1)^{2p+1} + f, \quad \mathcal{L}_\lambda \ln \frac{1}{2} = -\frac{1}{2^{2p+2}} \lambda + f .
$$
It becomes evident that by choosing sufficiently large values of \( A > 1 \) and \( \lambda > 1 \), we can ensure
\begin{equation}\label{lul}
\mathcal{L}_\lambda A > 0, \quad \mathcal{L}_\lambda \ln\frac{1}{2} < 0 .
\end{equation}
Define  the functional \( \mathcal{J}_\lambda: L= L^{\infty}(V) \rightarrow \mathbb{R} \) by 
\begin{equation}\mathcal{J}_\lambda(u)=\frac{1}{2} \int_V|\nabla u|^2 d \mu+\frac{\lambda}{2(p+1)} \int_V\left(e^u-1\right)^{2(p+1)} d \mu+\int_V f u d \mu.
\end{equation}
 Noting that \( L \cong \mathbb{R}^{m} \) and \( \mathcal{J}_\lambda \in C^2(L, \mathbb{R}) \), we consider the bounded closed subset \( \{u \in L : \ln\frac{1}{2} \leq u \leq A\} \) of \( L \). It is straightforward to identify some \( u_\lambda \in L \) satisfying \( \ln\frac{1}{2} \leq u_\lambda(x) \leq A \) for all \( x \in V \) and
\begin{equation}\label{dej}
\mathcal{J}_\lambda(u_\lambda) = \min_{\ln \frac{1}{2} \leq u \leq A} \mathcal{J}_\lambda(u) .
\end{equation}
We assert that
\begin{equation}\label{asi}
\ln \frac{1}{2} < u_\lambda(x) < A \quad\text{ for all } \quad x \in V.
\end{equation}
Let us assume the contrary. It must be the case that \( u_\lambda(x_0) = \ln \frac{1}{2} \) for some \( x_0 \in V \), or \( u_\lambda(x_1) = A \) for some \( x_1 \in V \). If \( u_\lambda(x_0) = \ln \frac{1}{2} \), we choose a small \( \epsilon > 0 \) such that
$$
\ln\frac{1}{2} \leq u_\lambda(x) + t \delta_{x_0}(x) \leq A, \quad \forall x \in V, \,\,\forall t \in (0, \epsilon) .
$$
On one hand, considering equations (\ref{lul}) and (\ref{dej}), we derive
\begin{equation}\label{jlu}
\begin{aligned}
	0 &\leq \left.\frac{d}{d t}\right|_{t=0} \mathcal{J}_\lambda(u_\lambda + t \delta_{x_0}) \\
	&= \int_V (-\Delta u_\lambda + \lambda e^{u_\lambda}(e^{u_\lambda} - 1)^{2p+1} + f) \delta_{x_0} \, d \mu \\
	&= -\Delta u_\lambda(x_0) + \lambda e^{u_\lambda(x_0)}(e^{u_\lambda(x_0)} - 1)^{2p+1} + f(x_0) \\
	&< -\Delta u_\lambda(x_0) .
\end{aligned}
\end{equation}
On the other hand, since \( u_\lambda(x) \geq u_\lambda(x_0) \) for all \( x \in V \), it follows that \( \Delta u_\lambda(x_0) \geq 0 \), leading to a contradiction with equation (\ref{jlu}). Therefore, it must hold that \( u_\lambda(x) > \ln\frac{1}{2} \) for all \( x \in V \). Similarly, we can exclude the possibility that \( u_\lambda(x_1) = A \) for some \( x_1 \in V \). This verification substantiates our assertion  (\ref{asi}). By synthesizing equations (\ref{dej}) and (\ref{asi}), we conclude that \( u_\lambda \) is a local minimum critical point of \( \mathcal{J}_\lambda \), and specifically, a solution of equation (\ref{meq}).

We now turn our attention to the subcase where \( \lambda < 0 \) and \( \bar{f} < 0 \). Consider \( \varphi \), the unique solution to the system
\begin{equation}\label{vse}
\left\{\begin{array}{l}
	\Delta \varphi = f - \bar{f} \\
	\bar{\varphi} = 0 .
\end{array}\right.
\end{equation}
Employing the previously defined operator \( \mathcal{L}_\lambda \) from equation (\ref{dfl}), we can deduce that
\begin{equation}\label{cli}
\begin{aligned}
	\mathcal{L}_\lambda(\varphi - A) &= -\Delta \varphi + \lambda e^{\varphi - A}(e^{\varphi - A} - 1)^{2p+1} + f \\
	&= \lambda e^{\varphi - A}(e^{\varphi - A} - 1)^{2p+1} + \bar{f} \\
	&< 0
\end{aligned}
\end{equation}
and
$$
\begin{aligned}
	\mathcal{L}_\lambda\left(\ln \frac{1}{2}\right) &= \lambda e^{\ln \frac{1}{2}}\left(e^{\ln\frac{1}{2}} - 1\right)^{2p+1} + f \\
	&= -\frac{\lambda}{2^{2p+2}} + f \\
	&> 0,
\end{aligned}
$$
assuming \( \lambda < 2^{2p+2} \min_V f \) and \( A > 1 \) is chosen to be sufficiently large. Following a similar approach to equations (\ref{dej}) and (\ref{asi}), there exists a certain \( u_\lambda \) satisfying \( \varphi(x) - A < u_\lambda(x) < \ln\frac{1}{2} \) for all \( x \in V \), and
$$
\mathcal{J}_\lambda(u_\lambda) = \min_{\varphi - A \leq u \leq \ln \frac{1}{2}} \mathcal{J}_\lambda(u) = \min_{\varphi - A < u < \ln \frac{1}{2}} \mathcal{J}_\lambda(u)
$$
This leads us to conclude that \( u_\lambda \) is a local minimum solution of equation (\ref{meq}).
\end{proof}

Next, we demonstrate that the range of $\l$ for which equation (\ref{meq}) admits a local minimum solution encompasses two intervals.
\begin{lemma}\label{l42}
Assume that \( L_{\lambda_1} u_{\lambda_1} = L_{\lambda_2} u_{\lambda_2} = 0 \) holds true on the set \( V \). If the conditions \( \lambda > \lambda_1 > 0 \) or \( \lambda < \lambda_2 < 0 \) are satisfied, it follows that equation (\ref{meq}) admits a local minimum solution, denoted as \( u_\lambda \).
	
\end{lemma}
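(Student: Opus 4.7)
The plan is to mimic the constrained minimization strategy of Lemma \ref{l41}, but with a log-shifted copy of $u_{\lambda_1}$ (respectively $u_{\lambda_2}$) playing the role that $\ln\frac{1}{2}$ (respectively $\varphi - A$) played before. For the case $\lambda > \lambda_1 > 0$, I would introduce $\underline{u} := u_{\lambda_1} + \ln(\lambda_1/\lambda)$ and verify that it is a strict sub-solution for $\mathcal{L}_\lambda$. Writing $c = \ln(\lambda_1/\lambda) < 0$, the identity $\lambda e^c = \lambda_1$ combined with $\Delta \underline{u} = \Delta u_{\lambda_1}$ gives
$$
\mathcal{L}_\lambda \underline{u} - \mathcal{L}_{\lambda_1} u_{\lambda_1} = \lambda_1 e^{u_{\lambda_1}}\bigl[(e^c e^{u_{\lambda_1}} - 1)^{2p+1} - (e^{u_{\lambda_1}} - 1)^{2p+1}\bigr] < 0,
$$
because $e^c < 1$ and $x \mapsto x^{2p+1}$ is strictly increasing on $\mathbb{R}$. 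For the super-solution I would take $A > 1$ sufficiently large that $\mathcal{L}_\lambda A = \lambda e^A(e^A-1)^{2p+1} + f > 0$ and $A > \underline{u}$ on $V$, exactly as in the first subcase of Lemma \ref{l41}.

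For the symmetric case $\lambda < \lambda_2 < 0$, set $\tilde{u} := u_{\lambda_2} + \ln(\lambda_2/\lambda)$. The ratio $\lambda_2/\lambda$ still lies in $(0,1)$, so the same algebraic identity produces an analogous difference; however, $\lambda_2 e^{u_{\lambda_2}} < 0$ flips the inequality and yields $\mathcal{L}_\lambda \tilde{u} > 0$, making $\tilde{u}$ a strict super-solution. I would pair it with the sub-solution $\varphi - A$, where $\varphi$ solves (\ref{vse}) and $A$ is large, following the computation in (\ref{cli}) from the second subcase of Lemma \ref{l41}, and ensure $\varphi - A < \tilde{u}$ on $V$ by taking $A$ larger if necessary.

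Having sub- and super-solutions in hand, I would minimize $\mathcal{J}_\lambda$ over the compact closed box $\{\underline{u} \leq u \leq A\}$ (respectively $\{\varphi - A \leq u \leq \tilde{u}\}$) inside $L \cong \mathbb{R}^m$; a minimizer $u_\lambda$ exists by continuity. To upgrade $u_\lambda$ to a genuine critical point I would repeat the one-sided derivative test of (\ref{jlu}): if $u_\lambda$ touched $\underline{u}$ at some $x_0 \in V$, the graph Laplacian comparison $\Delta u_\lambda(x_0) \geq \Delta \underline{u}(x_0)$, which holds because $u_\lambda \geq \underline{u}$ on $V$ with equality at $x_0$, would give $\mathcal{L}_\lambda u_\lambda(x_0) \leq \mathcal{L}_\lambda \underline{u}(x_0) < 0$, contradicting the first-order condition $\mathcal{L}_\lambda u_\lambda(x_0) \geq 0$ obtained by perturbing upward with $t\delta_{x_0}$; the opposite boundary is excluded symmetrically. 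Hence $u_\lambda$ lies strictly inside the box and is a local minimum critical point of $\mathcal{J}_\lambda$, i.e.\ a solution of (\ref{meq}). The only delicate step is the strict sub-/super-solution verification for the shifted $u_{\lambda_1}$ and $u_{\lambda_2}$, which rests squarely on the strict monotonicity of $x \mapsto x^{2p+1}$ being preserved under the shift identity $\lambda e^c = \lambda_j$; once that is in place, the remainder is a direct transcription of Lemma \ref{l41}.
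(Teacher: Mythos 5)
Your proposal is correct and follows essentially the same route as the paper: the paper likewise minimizes $\mathcal{J}_\lambda$ over the box bounded below by $u_{\lambda_1}+\ln\frac{\lambda_1}{\lambda}$ and above by a large constant $A$ (respectively between $\varphi-A$ and $u_{\lambda_2}+\ln\frac{\lambda_2}{\lambda}$), and rules out boundary contact by the one-sided derivative test combined with the identity $\lambda e^{\ln(\lambda_j/\lambda)}=\lambda_j$ and the strict monotonicity of $x\mapsto x^{2p+1}$. The only cosmetic difference is that you package the key inequality as a global strict sub-/super-solution property of the shifted function, whereas the paper performs the same computation pointwise at the hypothetical touching point.
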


\begin{proof}
Let us assume that \( \lambda > \lambda_1 > 0 \). Choose \( A > 1 \) to be a sufficiently large constant such that \(\mathcal{L}_\lambda A > 0 \) and \( u_{\lambda_1} + \ln\frac{\lambda_1}{\lambda} < A \) hold true on \( V \). Under these conditions, there exists a function \( u_\lambda \) satisfying
$$
\mathcal{J}_\lambda(u_\lambda) = \min_{u_{\lambda_1} + \ln \frac{\lambda_1}{\lambda} \leq u \leq A} \mathcal{J}_\lambda(u).
$$

Suppose there exists a point \( x_0 \in V \) where \( u_\lambda(x_0) = u_{\lambda_1}(x_0) + \ln\frac{\lambda_1}{\lambda} \). Select a small \( \epsilon > 0 \) such that for all \( t \in (0, \epsilon) \), the following inequality is satisfied:
$$
u_{\lambda_1}(x) + \ln \frac{\lambda_1}{\lambda} \leq  u_\lambda(x) + t \delta_{x_0}(x) \leq  A \quad \text{for all} \quad x \in V .
$$
In a manner analogous to the approach used in the proof of Lemma \ref{l41}, we derive
$$
\begin{aligned}
	0 &\leq \left.\frac{d}{dt}\right|_{t=0} \mathcal{J}_\lambda(u_\lambda + t \delta_{x_0}) \\
	&= -\Delta u_\lambda(x_0) + \lambda e^{u_\lambda(x_0)}(e^{u_\lambda(x_0)} - 1)^{2p+1} + f(x_0) \\
	&= -\Delta(u_\lambda - u_{\lambda_1})(x_0)-\Delta u_{\lambda_1}(x_0)+\lambda_1 e^{u_{\lambda_1}(x_0)}\left(\frac{\lambda_1}{\lambda} e^{u_{\lambda_1}(x_0)}-1\right)^{2p+1}+f(x_0) \\
	&<-\Delta(u_\lambda - u_{\lambda_1})(x_0)-\Delta u_{\lambda_1}(x_0)+\lambda_1 e^{u_{\lambda_1}(x_0)}\left( e^{u_{\lambda_1}(x_0)}-1\right)^{2p+1}+f(x_0)\\
	&=-\Delta(u_\lambda - u_{\lambda_1})(x_0).
\end{aligned}
$$
This result contradicts the established fact that \( x_0 \) is a minimum point of \( u_\lambda - u_{\lambda_1} - \ln \frac{\lambda_1}{\lambda} \). Consequently, it must hold that
$$
u_\lambda(x) > u_{\lambda_1}(x) + \ln \frac{\lambda_1}{\lambda}, \quad \forall x \in V .
$$
Similarly, we can conclude that \( u(x) < A \) for all \( x \in V \). Therefore, \( u_\lambda \) is a local minimum critical point of \( \mathcal{J}_\lambda \).

Assume \( \lambda < \lambda_2 < 0 \). We select a sufficiently large constant \( A > 1 \) such that \( \varphi - A < u_{\lambda_2} + \ln \frac{\lambda_2}{\lambda} \) on \( V \), and ensure that \( \varphi - A \) fulfills equation (\ref{cli}). It is evident that there exists some \( u_\lambda \) satisfying
$$
\mathcal{J}_\lambda(u_\lambda) = \min_{\varphi - A \leq u \leq u_{\lambda_2} + \ln \frac{\lambda_2}{\lambda}} \mathcal{J}_\lambda(u) .
$$
If a point \( x_1 \in V \) exists such that \( u_\lambda(x_1) = u_{\lambda_2}(x_1) + \ln\frac{\lambda_2}{\lambda} \), then for a small \( \epsilon > 0 \) and for all \( t \in (0, \epsilon) \), the following condition holds:
$$
\varphi(x) - A \leq u_\lambda(x) - t \delta_{x_1}(x) \leq u_{\lambda_2}(x) + \ln\frac{\lambda_2}{\lambda} \quad \text{for all} \quad x \in V .
$$
Utilizing the same method as previously discussed, we arrive at
$$
\begin{aligned}
	0 &\leq \left.\frac{d}{dt}\right|_{t=0} \mathcal{J}_\lambda(u_\lambda - t \delta_{x_1}) \\
	&= \Delta u_\lambda(x_1) - \lambda e^{u_\lambda(x_1)}(e^{u_\lambda(x_1)} - 1)^{2p+1} - f(x_1) \\
	&= \Delta(u_\lambda - u_{\lambda_2})(x_1) +\Delta u_{\lambda_2}(x_1)-\lambda_2 e^{u_{\lambda_2}(x_1)}\left(\frac{\lambda_2}{\lambda} e^{u_{\lambda_2}(x_1)}-1\right)^{2p+1}-f(x_1)\\
	&<\Delta(u_\lambda - u_{\lambda_2})(x_1) +\Delta u_{\lambda_2}(x_1)-\lambda_2 e^{u_{\lambda_2}(x_1)}\left( e^{u_{\lambda_2}(x_1)}-1\right)^{2p+1}-f(x_1)\\
	&=\Delta(u_\lambda - u_{\lambda_2})(x_1).
\end{aligned}
$$
This result is in contradiction with the established fact that \( x_1 \) is a maximum point of \( u_\lambda - u_{\lambda_2} - \ln \frac{\lambda_2}{\lambda} \). Therefore, we conclude that
$$
u_\lambda(x) < u_{\lambda_2}(x) + \ln \frac{\lambda_2}{\lambda}, \quad \forall x \in V .
$$
Similarly, we deduce that \( u(x) > \varphi(x) - A \) for all \( x \in V \). Hence, \( u_\lambda \) is a local minimum critical point of \( \mathcal{J}_\lambda \). This completes the proof of the lemma.
\end{proof}

Drawing conclusions from Lemmas \ref{l41} and \ref{l42}, we ascertain that the following two critical values are well-defined:
\begin{equation}\label{lud}
\Lambda^* = \inf \left\{ \lambda > 0 : \lambda \bar{f} > 0, \text{and } \mathcal{J}_\lambda \text{ possesses a local minimum critical point} \right\} 
,\end{equation}
\begin{equation}\label{lld}
\Lambda_* = \sup \left\{ \lambda < 0 : \lambda \bar{f} > 0, \text{and } \mathcal{J}_\lambda \text{ possesses a local minimum critical point} \right\} .
\end{equation}

Next, a lower bound for $\Lambda^*$ and an upper bound for $\Lambda_*$ are established.
\begin{lemma} If \( \bar{f} > 0 \), then \( \Lambda^* \geq \frac{(2p+2)^{2p+2}}{(2p+1)^{2p+1}} \bar{f} \); if \( \bar{f} < 0 \), then \( \Lambda_* \leq \frac{(2p+2)^{2p+2}}{(2p+1)^{2p+1}} \bar{f} \).
\end{lemma}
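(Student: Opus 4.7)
The plan is to prove the stronger statement that no solution of (\ref{meq}) exists when $\lambda$ violates the stated bound. Since every local minimum critical point of $\mathcal{J}_\lambda$ is a solution of (\ref{meq}), the bounds on $\Lambda^{*}$ and $\Lambda_{*}$ then follow immediately from the definitions (\ref{lud}) and (\ref{lld}). The argument combines a simple integral identity with an elementary pointwise lower bound on the nonlinearity.

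First, I would integrate (\ref{meq}) over $V$. Because $\int_V \Delta u\,d\mu = 0$, any solution $u$ must satisfy
$$
|V|\,\bar{f} = \int_V f\,d\mu = -\lambda \int_V e^u(e^u-1)^{2p+1}\,d\mu.
$$
Second, I would compute the global infimum on $(0,\infty)$ of the auxiliary function $g(s) = s(s-1)^{2p+1}$. A direct differentiation gives
$$
g'(s) = (s-1)^{2p}\bigl[(2p+2)s - 1\bigr],
$$
so $g$ attains its unique interior minimum at $s_{*} = 1/(2p+2)$, with value
$$
g(s_{*}) = \frac{1}{2p+2}\left(-\frac{2p+1}{2p+2}\right)^{2p+1} = -\frac{(2p+1)^{2p+1}}{(2p+2)^{2p+2}}.
$$
Since $e^u > 0$ for every $u \in \mathbb{R}$, this gives the pointwise bound $e^u(e^u-1)^{2p+1} \geq -(2p+1)^{2p+1}/(2p+2)^{2p+2}$.

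Third, I would combine the two ingredients. Assume $\bar{f} > 0$ and $\lambda > 0$. Multiplying the pointwise bound by $\lambda > 0$, integrating, and inserting into the identity above yields
$$
|V|\,\bar{f} \leq \lambda\,\frac{(2p+1)^{2p+1}}{(2p+2)^{2p+2}}\,|V|,
$$
so any admissible $\lambda$ satisfies $\lambda \geq \frac{(2p+2)^{2p+2}}{(2p+1)^{2p+1}}\bar{f}$. Because every local minimum critical point of $\mathcal{J}_\lambda$ is a solution of (\ref{meq}), the set in (\ref{lud}) is contained in $\bigl[\frac{(2p+2)^{2p+2}}{(2p+1)^{2p+1}}\bar{f},\infty\bigr)$, giving the lower bound on $\Lambda^{*}$. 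The case $\bar{f}<0$, $\lambda<0$ is entirely parallel: multiplying the pointwise estimate by $\lambda<0$ reverses the inequality, and the same manipulation yields $\lambda \leq \frac{(2p+2)^{2p+2}}{(2p+1)^{2p+1}}\bar{f}$, so the supremum in (\ref{lld}) satisfies the claimed upper bound on $\Lambda_{*}$. I do not expect any real obstacle; the only nonroutine step is the elementary optimization of $g(s)$, which is essentially calculus.
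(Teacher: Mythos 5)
Your proposal is correct and follows essentially the same route as the paper: integrate the equation over $V$ to kill the Laplacian term, then apply the pointwise bound $e^u(e^u-1)^{2p+1}\geq -\frac{(2p+1)^{2p+1}}{(2p+2)^{2p+2}}$ to constrain $\lambda$ for any solution, hence for any local minimum critical point of $\mathcal{J}_\lambda$. The only difference is that you explicitly carry out the elementary optimization of $s(s-1)^{2p+1}$ that the paper merely asserts, which is a welcome addition but not a different argument.
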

\begin{proof} Let us assume \( \lambda \neq 0 \) and that \( u \) is a solution of the equation \( \Delta u = \lambda e^u(e^u - 1)^{2p+1} + f \). Integration by parts yields
$$
-\frac{\int_V f \, d \mu}{\lambda} = \int_V e^u(e^u - 1)^{2p+1} \, d \mu \geq -\frac{(2p+1)^{2p+1}|V|}{(2p+2)^{2p+2}},
$$
since \( e^u(e^u - 1)^{2p+1} \geq -\frac{(2p+1)^{2p+1}}{(2p+2)^{2p+2}} \). The conclusion is then immediately derived from  (\ref{lud}) and (\ref{lld}).
\end{proof}
We are now well-positioned to finalize the proof of the remaining part of the theorem.

We first examine the solvability of equation (\ref{meq}) under the assumption that \( \lambda \in (0, \Lambda^*] \cup [\Lambda_*, 0) \).

If \( \lambda \in (0, \Lambda^*) \cup (\Lambda_*, 0) \), then equation (\ref{meq}) has no solution. Indeed, assume there is a \( \lambda_1 \in (0, \Lambda^*) \cup (\Lambda_*, 0) \) such that equation (\ref{meq}) is solvable at \( \lambda = \lambda_1 \). Without loss of generality, let \( \lambda_1 \in (0, \Lambda^*) \). Then, according to Lemma \ref{l42}, equation (\ref{meq}) possesses a local minimum solution for any \( \lambda \in (\lambda_1, \Lambda^*] \), contradicting the definition of \( \Lambda^* \). Therefore, equation (\ref{meq}) is unsolvable for any \( \lambda \in (0, \Lambda^*) \cup (\Lambda_*, 0) \).

Moreover, for any \( i\in \mathbb{N} \), a solution \( u_i \) of (\ref{meq}) exists with \( \lambda = \Lambda^* + 1/i \). Based on Theorem \ref{th1}, the sequence \( (u_i) \) is uniformly bounded in \( V \). Thus, a subsequence of \( (u_i) \) uniformly converges to some function \( u^* \), a solution of (\ref{meq}) with \( \lambda = \Lambda^* \). Similarly, equation (\ref{meq}) also admits a solution at \( \lambda = \Lambda_* \).

Next, we consider the existence of multiple solutions of (\ref{meq}) under the assumption \( \lambda \in (\Lambda^*, +\infty) \cup (-\infty, \Lambda_*) \). If \( \lambda \in (\Lambda^*, +\infty) \cup (-\infty, \Lambda_*) \), by equations (\ref{lud}) and (\ref{lld}), let \( u_\lambda \) be a local minimum critical point of \( \mathcal{J}_\lambda \). Assuming \( u_\lambda \) as the unique critical point of \( \mathcal{J}_\lambda \) (otherwise, \( J_\lambda \) already has at least two critical points and the proof concludes). According to [\cite{MR1196690}, Chapter 1, Page 32], the \( q \)-th critical group of \( \mathcal{J}_\lambda \) at \( u_\lambda \) is defined by
$$
\mathsf{C}_q(\mathcal{J}_\lambda, u_\lambda) = \mathsf{H}_q(\mathcal{J}_\lambda^c \cap U, \{\mathcal{J}_\lambda^c \backslash \{u_\lambda\}\} \cap U, \mathsf{G}),
$$
where \( \mathcal{J}_\lambda(u_\lambda) = c \), \( \mathcal{J}_\lambda^c = \{u \in L: \mathcal{J}_\lambda(u) \leq c\} \), \( U \) is a neighborhood of \( u_\lambda \) in \( L \), and \( \mathsf{H}_q \) is the singular homology group with coefficients in group \( G \) (e.g., \( \mathbb{Z}, \mathbb{R} \)). The excision property of \( \mathsf{H}_q \) ensures this definition is independent of \( U \)'s choice. It is straightforward to compute
\begin{equation}\label{cqg}
\mathsf{C}_q(\mathcal{J}_\lambda, u_\lambda) = \delta_{q0} \mathsf{G}.
\end{equation}
We show that \( \mathcal{J}_\lambda \) satisfies the Palais-Smale condition. If \( \mathcal{J}_\lambda(u_i) \rightarrow c \in \mathbb{R} \) and \( J_\lambda'(u_i) \rightarrow 0 \) as \( i \rightarrow \infty \), then using the method from Theorem \ref{th1}, the sequence \( (u_i) \) is uniformly bounded. Given that \( L \) is precompact, a subsequence of \( (u_i) \) converges uniformly to some \( u^* \), a critical point of \( \mathcal{J}_\lambda \). Thus, the Palais-Smale condition is established. Note also that
$$
D \mathcal{J}_\lambda(u) = -\Delta u + \lambda e^u(e^u - 1)^{2p+1} + f = \mathcal{F}(u),
$$
where \( \mathcal{F} \) is as defined in Theorem \ref{th2}. Referencing [\cite{MR1196690}, Chapter 2, Theorem 3.2], and in light of (\ref{cqg}), for sufficiently large \( R > 1 \), we have
$$
\operatorname{deg}(\mathcal{F}, B_R, 0) = \operatorname{deg}(D \mathcal{J}_\lambda, B_R, 0) = \sum_{q=0}^{\infty} (-1)^q \operatorname{rank} \mathrm{C}_q(\mathcal{J}_\lambda, u_\lambda) = 1.
$$
This is in contradiction with \( \operatorname{deg}(\mathcal{F}, B_R, 0) = 0 \) as derived from Theorem \ref{th2}. Hence, equation (\ref{meq}) must have at least two distinct solutions. This completes the proof of Theorem \ref{th3} (b).

\vskip 20 pt
\noindent{\bf Acknowledgement}

This work is  partially  supported by  the National Key R and D Program of China 2020YFA0713100 and by the National Natural Science Foundation of China (Grant No. 11721101).

\vskip 20 pt

\bibliographystyle{plain}
\bibliography{D:/Reference/CHS.bib}

\end{document}